\mathchardef\mhyphen="2D
\DeclareMathOperator{\pt}{pt}
\DeclareMathOperator{\Sch}{\mathbf{Sch}_{\mathbb Q}^f}
\newcommand{\CovSch}{\widetilde{\Sch}}
\DeclareMathOperator{\Ev}{Ev}
\newcommand{\AutSch}{\mathcal{A}}
\newcommand{\RingSch}{\mathcal{R}}
\DeclareMathOperator{\Id}{Id}
\newcommand{\CCC}{\mathcal{C}}
\newcommand{\GG}{\mathbbm{G}}
\newcommand{\UUU}{\mathcal{U}}
\newcommand{\VVV}{\mathcal{V}}
\newcommand{\FF}{\mathbbm{F}}
\newcommand{\ZZ}{\mathbbm{Z}}
\newcommand{\CC}{\mathbbm{C}}
\newcommand{\QQ}{\mathbbm{Q}}
\newcommand{\tik}{\begin{tikzcd}}
\newcommand{\tak}{\end{tikzcd}}
\def\latearrow#1#2#3#4{%
  \toks@\expandafter{\tikzcd@savedpaths\path[/tikz/commutative diagrams/every arrow,#1]}%
  \global\edef\tikzcd@savedpaths{%
    \the\toks@%
    (\tikzmatrixname-#2)
    to%
    node[/tikz/commutative diagrams/every label] {$#4$}
    (\tikzmatrixname-#3)
;}}
\def\stik#1#2{
\begin{tikzpicture}[baseline= (a).base]%
\node[scale=#1] (a) at (0,0){%
\begin{tikzcd}[ampersand replacement=\&]%
#2
\end{tikzcd}%
};%
\end{tikzpicture}
}
\theoremstyle{plain}
\newtheorem{Theorem}{Theorem}[section]
\newtheorem{introtheorem}{Theorem}
\newaliascnt{Proposition}{Theorem}
\newtheorem{Proposition}[Proposition]{Proposition}
\newaliascnt{Corollary}{Theorem}
\newtheorem{Corollary}[Corollary]{Corollary}
\newaliascnt{Lemma}{Theorem}
\newtheorem{Lemma}[Lemma]{Lemma}
\newaliascnt{Definition}{Proposition}
\newaliascnt{Notation}{Theorem}
\newaliascnt{Remark}{Theorem}
\newcommand{\Rami}[1]{{{#1}}}
\title{Autoequivalences of the category of schemes}
\author{Avraham Aizenbud and Adam Gal}
\begin{document}

\maketitle
\begin{abstract}
We prove that there is no non-trivial aoutoequvivalence of the category of schemes of finite type over $\QQ$.
\end{abstract}

\tableofcontents

\section{Introduction}
In this paper we prove the following theorem
\begin{introtheorem}\label{thm:main}
Let  $\Sch$ be the category of schemes of finite type over $\QQ$. Then any autoequivalence of this category is isomorphic to the identity functor by a unique isomorphism.
\end{introtheorem}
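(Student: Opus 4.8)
Fix an autoequivalence $F\colon\Sch\to\Sch$ and a quasi-inverse $G$. As an equivalence, $F$ preserves every limit and colimit that exists in $\Sch$: the terminal object $\operatorname{Spec}\QQ$, the initial (empty) object, finite coproducts, and all fibre products; consequently it also preserves monomorphisms, epimorphisms, diagonals $\Delta_X\colon X\to X\times X$, equalizers (fibre products of a diagonal), and connectedness ($X$ is connected iff it is not a coproduct of two nonempty objects). The aim is to upgrade this to a natural isomorphism $F\cong\Id_\Sch$, and to show separately that $\operatorname{Aut}(\Id_\Sch)=\{1\}$, which gives the uniqueness clause.

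\textbf{Step 1 (the heart of the matter): recover the Zariski topology and the ring scheme $\mathbb{A}^1$.} First one isolates the zero-dimensional schemes. The basic intrinsic objects are the \emph{subobject-simple} ones: a nonempty $X$ whose only subobjects are $\emptyset$ and $X$ is precisely $\operatorname{Spec}K$ for a number field $K$ (a finite-type $\QQ$-scheme with more than one point has a reduced closed point as a proper subobject; a non-reduced Artinian local ring has its residue field as one; and $\operatorname{Spec}K$ genuinely has no others, e.g.\ because $\operatorname{Spec}K\times_{\operatorname{Spec}\QQ}\operatorname{Spec}K\to\operatorname{Spec}K$ is not an isomorphism once $K\neq\QQ$). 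Over $\QQ$ a point is closed iff its residue field is finite over $\QQ$, so the morphisms $\operatorname{Spec}K\to X$ with $K$ a number field see exactly the closed points of $X$ together with their residue fields; this lets one detect surjectivity of a closed immersion, then nilpotent thickenings, then closed immersions in general — equivalently open immersions and the whole Zariski topology — by suitable categorical conditions on monomorphisms and on maps to and from small test schemes. With the topology available, \emph{affine}, \emph{separated}, \emph{proper} and \emph{dimension} become categorical notions. Finally $\mathbb{A}^1$, with its additive group structure, is characterized as the unique connected group object that is affine, one-dimensional, and has no nontrivial finite subgroup-object — in characteristic zero this rules out the one-dimensional tori and the elliptic curves. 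Since the ring scheme $\mathbb{A}^1$ over $\QQ$ has trivial automorphism group (an additive automorphism has the form $t\mapsto at$, and compatibility with multiplication forces $a=a^2$, hence $a=1$), we may and do choose the isomorphism $F(\mathbb{A}^1)\xrightarrow{\sim}\mathbb{A}^1$ compatibly with both addition and multiplication, and henceforth regard $F$ as fixing the ring object $\mathbb{A}^1$, and therefore each $\mathbb{A}^n=(\mathbb{A}^1)^n$ with its coordinate projections and its zero section.

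\textbf{Steps 2 and 3: from $\mathbb{A}^1$ to all of $\Sch$.} A scheme is affine iff it admits a closed immersion into some $\mathbb{A}^n$, so $F$ and $G$ preserve the full subcategory $\Aff$. On $S=\mathbb{A}^n$ the coordinate functions $x_i\colon S\to\mathbb{A}^1$ are the categorical projections, hence fixed by $F$; the constants $\Hom(\operatorname{Spec}\QQ,\mathbb{A}^1)=\QQ$ are fixed because $\operatorname{Aut}(\QQ)=1$; and $F$ respects the ring structure on $\Hom(S,\mathbb{A}^1)=\QQ[x_1,\dots,x_n]$, so it fixes every polynomial morphism $S\to\mathbb{A}^m$. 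As $\operatorname{Spec}(\QQ[x_1,\dots,x_n]/(f_1,\dots,f_m))$ is the equalizer of $(f_1,\dots,f_m)\colon\mathbb{A}^n\to\mathbb{A}^m$ and the zero morphism, $F$ fixes it, compatibly with its embedding into $\mathbb{A}^n$; bookkeeping turns this into a natural isomorphism $F|_{\Aff}\cong\Id_{\Aff}$ — equivalently, the induced autoequivalence of the category of finitely generated $\QQ$-algebras is naturally isomorphic to the identity (a natural automorphism of that identity is trivial: evaluate at $\QQ[t]$ and use naturality against $t\mapsto 0$, $t\mapsto\lambda t$ and $t\mapsto t+c$). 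Now the Yoneda embedding $\Sch\hookrightarrow\Fun(\Aff^{\mathrm{op}},\Set)$, $X\mapsto\Hom_{\Sch}(-,X)|_{\Aff}$, is fully faithful because every scheme is covered by affine opens; for general $X$ the natural isomorphisms $\Hom(S,FX)\cong\Hom(GS,X)\cong\Hom(S,X)$ (the second from $G|_{\Aff}\cong\Id$) give $FX\cong X$ naturally in $X$, so $F\cong\Id_\Sch$. Likewise any $\eta\in\operatorname{Aut}(\Id_\Sch)$ restricts to the trivial automorphism of $\Id_{\Aff}$, and then $\eta_X\circ(S\to X)=(S\to X)$ for every affine $S\to X$ forces $\eta_X=\Id_X$; hence $\operatorname{Aut}(\Id_\Sch)=\{1\}$.

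\textbf{Main obstacle.} Everything outside Step 1 is routine, if lengthy, Yoneda-and-gluing bookkeeping. The real difficulty — and essentially the only place where the base being $\QQ$ enters, through its characteristic zero and through $\operatorname{Aut}(\QQ)=1$ — is the purely categorical reconstruction in Step 1: recognizing closed and open immersions (hence affineness and dimension), and then pinning down the ring scheme $\mathbb{A}^1$. I expect this to rest on a careful study of monomorphisms in $\Sch$ and on detecting geometric features by maps to and from a short list of explicit test schemes, such as fat points, the nodal and cuspidal cubics, and $\mathbb{A}^1$ itself.
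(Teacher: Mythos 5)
Your overall architecture coincides with the paper's: first pin down $\GG_a=\mathbb{A}^1$ as a ring object by intrinsic, categorical means, then bootstrap to all of $\Sch$. Your Steps 2 and 3 are, up to repackaging, the paper's \autoref{Prop:fullyfaithful}: the paper writes affine schemes as fibre products of affine spaces (your equalizer presentation is the same datum) and then glues along affine covers via colimits, where you instead invoke full faithfulness of $X\mapsto\Hom(-,X)|_{\Aff}$; both work, and your argument that $\operatorname{Aut}(\Id_{\Sch})$ is trivial matches the paper's corollary. The divergence, and the problem, is Step 1.

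Step 1 as written is a program, not a proof, and it is exactly the step the whole theorem turns on. You propose first to reconstruct the Zariski topology (closed points via subobject-simple objects, then closed immersions, then open immersions, then affineness and dimension) and only then to characterize $\mathbb{A}^1$ as the unique connected affine one-dimensional group object with no nontrivial finite subgroup object. None of the topological reconstruction is carried out, and it is genuinely delicate (monomorphisms in $\Sch$ are a subtler class than locally closed immersions, and passing from closed immersions to open ones takes more than complements of closed points); you yourself flag it as the main obstacle. The paper sidesteps all of it with a single observation (\autoref{prop:init.ob}): letting $m_2$ denote the composition of the diagonal with the group law, the pair $(\GG_a,1)$ is an \emph{initial object} in the category of pointed commutative group objects of $\Sch$ with $\ker(m_2)$ trivial. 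Every ingredient there (group object, $\QQ$-point, kernel as a pullback, triviality) is manifestly preserved by any autoequivalence, so no topology, affineness, dimension or finiteness ever needs to be recovered; the condition that $\ker(m_2)$ is trivial kills the tori, the abelian-variety quotients and all torsion at once, leaving only vector groups, and initiality among \emph{pointed} objects replaces your appeal to one-dimensionality. Separately, note a smaller gap in your Step 1: knowing $F(\mathbb{A}^1)\cong\GG_a$ as additive group objects does not yet entitle you to ``choose the isomorphism compatibly with multiplication''; you must first show the transported multiplication is isomorphic to the standard one (the paper does this via multiplicativity on the integer points, which are schematically dense; alternatively, classify unital biadditive maps $\GG_a\times\GG_a\to\GG_a$ in characteristic $0$). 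I would either carry out the topological reconstruction in full or, better, replace Step 1 by the initial-object characterization.
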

\subsection{Motivation}
The notions of schemes and algebraic varieties have several definitions (see e.g. \cite{EGA1}, \cite{Jantzen}, \cite{Kempf}). When one proves that two of those notions are equivalent one should ask what does this mean. Is it enough to find an equivalence between the corresponding categories? Should this equivalence preserve some structure (for example the category of coherent sheaves or some classes of maps, like flat or proper)? Should it be canonical in some sense?

\autoref{thm:main} means that if one is interested in the category $\Sch$ one can just check equivalence of categories. This equivalence will be unique and thus any other structure that this category possesses can be reconstructed from the categorical structure.

\subsection{related results} 
There are several works that study auto-equvivalences of categories. In \cite{Mak} several methods to prove that there are no non-trivial auto-equvivalences of a given category are listed. Those methods are implemented in several cases including the categories of groups and sets. Another related example can be found in \cite[Theorem 5.2.1]{EHS}. There are interesting cases when the category of autoequivalences is non-trivial (see e.g. \cite{Mukai}, \cite[Theorem 2.1.4]{Orlov})

\subsection{Idea of the proof}
The main part of the proof is \autoref{sec:ga.pres} where we prove that any autoequivalence of  $\Sch$ preserves the affine line as a ring object (See \autoref{thm:Ga}). We deduce it from the fact that the affine line pointed by $1$ is an initial object in the category of torsion free, pointed, abelian algebraic groups (See \autoref{prop:init.ob}).

The deduction of \autoref{thm:main} is based on the following facts
\begin{itemize}
    \item Any morphism of affine spaces can be expressed using the ring structure on $\mathbb A^1$
    \item Any affine scheme is a fiber product of affine spaces.
    \item Any quasicompact separated scheme  is a pushout of affine schemes.
    \item Any quasicompact scheme is a pushout of separated schemes.
\end{itemize}

\subsection{Possible extensions and limitations}
We believe that \autoref{thm:main} can be generalized to other categories of schemes. In fact the proof in this paper also holds for two subcategories of  $\Sch$:  (affine schemes and separated schemes). 

One can try to generalize the  argument in the paper beyond finite type schemes and to replace $\QQ$ by a general prime field or $\ZZ$. The main obstacle will be \autoref{prop:init.ob}, which generalization will require an understanding of the theory of group-schemes in wider generality\footnote{Also in order to make it work over $\FF_2$ one should modify the definition of $\CC$.}.   

On the other hand, \autoref{thm:main} stops being valid when we replace $\QQ$ by an arbitrary ring, or more generally a scheme $S$. This is because any automorphism of $S$ gives an autoequivalence of $\Sch_S^f$. However, it is concievable that there are no other  autoequivalence of $\Sch_S^f$. 
\subsection{Structure of the paper}

In \autoref{sec:red.2.ga} we prove proposition \autoref{Prop:fullyfaithful} that reduces the main result to \autoref{thm:Ga}.

In \autoref{sec:ga.pres} we prove \autoref{thm:Ga}.

\subsection*{Acknowledgements}
We \Rami{t}hank Inna Entova-Aizenbud and Vladimir Hinich for useful discussions.
\section{Reduction to \texorpdfstring{$\GG_a$}{Ga}}
\label{sec:red.2.ga}

\begin{Lemma}
Let $\Phi:\Sch\to\Sch$ be an autoequivalence, and suppose that $\Phi(\GG_a)\cong\GG_a$ as ring objects. Then $\Phi\cong\Id$.
\end{Lemma}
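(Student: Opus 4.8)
Granting the genuinely hard input that $\Phi(\GG_a)\cong\GG_a$ as ring objects (which is precisely the hypothesis here), the plan is to propagate control of $\Phi$ outward from $\GG_a$: first to affine spaces, then to all affine schemes, then to all of $\Sch$, using repeatedly that an equivalence of categories preserves every limit and colimit that exists. I would start by fixing a ring-object isomorphism $\alpha\colon\Phi(\GG_a)\xrightarrow{\sim}\GG_a$. Since $\Phi$ preserves the terminal object $\pt=\operatorname{Spec}\QQ$, transporting along $\alpha$ makes $\Phi$ act on $\Hom(\pt,\GG_a)=\QQ$ by a ring automorphism, hence by the identity, as $\QQ$ admits no nontrivial ring endomorphisms; informally $\Phi$ fixes all constant morphisms $\pt\to\GG_a$. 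More generally, $\Phi$ and $\alpha$ together produce for every object $X$ a bijection $\beta_X\colon\Hom(X,\GG_a)\xrightarrow{\sim}\Hom(\Phi X,\GG_a)$, natural in $X$; since $\alpha$ respects the ring-object structure and $\Phi$ preserves finite products, $\beta_X$ is a ring isomorphism, and naturality in the map $X\to\pt$ combined with the previous point shows it is a homomorphism of $\QQ$-algebras.

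I would then handle affine spaces. Because $\Phi$ preserves finite products, $\Phi(\mathbb A^n)=\Phi(\GG_a^{\,n})\cong\GG_a^{\,n}=\mathbb A^n$ compatibly with the coordinate projections, and every morphism $\mathbb A^n\to\mathbb A^m$, being an $m$-tuple of polynomials, is assembled by composition and products out of addition $\GG_a\times\GG_a\to\GG_a$, multiplication $\GG_a\times\GG_a\to\GG_a$, the constant maps $\pt\to\GG_a$, and the coordinate projections, all preserved by $\Phi$. Hence $\Phi$ is naturally isomorphic to the identity on the full subcategory of affine spaces. Since every affine scheme $\operatorname{Spec}\bigl(\QQ[x_1,\dots,x_n]/(f_1,\dots,f_m)\bigr)$ is the fibre product $\mathbb A^n\times_{\mathbb A^m}\pt$ along $(f_1,\dots,f_m)$ and the origin, a finite limit of affine spaces, and $\Phi$ preserves finite limits and sends $\GG_a$ to $\GG_a$, it follows that $\Phi$ carries affine schemes to affine schemes; feeding this and the natural algebra isomorphisms $\beta$ into the anti-equivalence between finitely generated commutative $\QQ$-algebras and affine schemes gives a natural isomorphism $\Phi\cong\Id$ on the full subcategory of affine schemes.

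Finally I would pass to an arbitrary $X\in\Sch$ using that $\Phi$ preserves colimits together with the structural facts recalled in the introduction: a quasi-compact separated scheme is a finite pushout of affine schemes coming from a finite affine open cover (separatedness forcing pairwise intersections to be affine); a quasi-compact scheme is in turn a finite pushout of quasi-compact separated schemes (pairwise intersections of members of a finite affine cover being open in an affine, hence separated, and, over $\QQ$, quasi-compact); and every scheme of finite type over $\QQ$ is quasi-compact. Applying $\Phi$ to such presentations and inserting the previous paragraph's isomorphism yields $\Phi(X)\cong X$ for every $X$.

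The step I expect to need the most care, now that the substantive content $\Phi(\GG_a)\cong\GG_a$ has been assumed, is turning these object-wise isomorphisms into a single natural isomorphism $\Phi\cong\Id$ on $\Sch$, because the cover presentations above are not functorial in $X$. I would get around this by first transporting structure so that $\Phi$ is literally the identity functor on the full subcategory of affine schemes (replacing $\Phi$ by a naturally isomorphic functor via the isomorphism of the previous paragraph), and then using the functorial presentation $X=\operatorname{colim}_{(V\to X)}V$, the colimit taken over the comma category of affine schemes lying over $X$: since $\Phi$ now fixes every affine scheme and every morphism between affine schemes, it fixes this whole diagram, so preservation of its colimit gives $\Phi(X)\cong X$ naturally, and in fact $\Phi\cong\Id$. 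A minor point to check along the way is that the limits and colimits invoked, although formed a priori in the category of all schemes, have their outputs in $\Sch$ and hence, $\Sch$ being a full subcategory, are genuine limits and colimits there and so are preserved by the equivalence $\Phi$.
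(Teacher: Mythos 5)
Your proposal is correct, and while it follows the same overall escalation as the paper ($\GG_a$ $\to$ affine spaces $\to$ affine schemes $\to$ all schemes, using preservation of limits and colimits by an equivalence), the technical devices you use at the two delicate points are genuinely different from the paper's. The paper proves the stronger statement that $\Ev:\AutSch\to\RingSch$ is \emph{fully faithful} (which also yields the uniqueness of the isomorphism $\Phi\cong\Id$ and the triviality of $\End(\Id)$, both part of \autoref{thm:main}); it builds the natural transformation by hand and verifies naturality of the objectwise candidates via \autoref{cubelemma} (a cube with monomorphic verticals for the affine step, its epimorphism dual for the gluing step), and it must separately check independence of the chosen affine cover by passing to common refinements, going through separated schemes as an intermediate stage. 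You instead get naturality on affines essentially for free: the map $f\mapsto\alpha\circ\Phi(f)$ is a natural $\QQ$-algebra isomorphism $\Hom(X,\GG_a)\to\Hom(\Phi X,\GG_a)$ (the $\QQ$-linearity coming from the absence of nontrivial ring endomorphisms of $\QQ$), and once the fibre-product presentation shows $\Phi X$ is affine, applying the anti-equivalence with finitely generated $\QQ$-algebras turns this directly into a natural isomorphism on the affine subcategory — no diagram chase needed. For globalization you replace the paper's non-functorial cover presentations by the canonical colimit $X=\operatorname{colim}_{(V\to X)}V$ over the comma category of affines over $X$, which \emph{is} functorial in $X$; after strictifying so that $\Phi$ is literally the identity on affines, preservation of this colimit gives the natural isomorphism in one step, with no separated intermediate stage and no cover-refinement argument. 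The points you must (and do) pay for are: that affine schemes are dense in $\Sch$, i.e. that $X$ really is the colimit of its canonical affine diagram in $\Sch$ (true — a compatible family of maps $V\to Y$ glues over the affine opens of $X$, with agreement on overlaps checked on affine refinements), and that an equivalence preserves this not-necessarily-finite colimit (true — equivalences preserve all limits and colimits, not just finite ones; this is also why your argument does not literally establish the paper's \autoref{Prop:fullyfaithful}, which is stated for endofunctors preserving only finite limits and colimits). In short: your route is a correct and arguably cleaner proof of the Lemma as stated, at the cost of proving slightly less than the paper's Proposition does.
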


More precisely, we will prove the following

\begin{Proposition}
\label{Prop:fullyfaithful}
Consider the categories $\AutSch$ - of endofunctors of $\Sch$ which preserve all finite limits and colimits - and $\RingSch$ - of all unital ring objects in $\Sch$. Consider the functor $\Ev:\AutSch\to\RingSch$ given by evaluation at $\GG_a$ (with the standard ring structure). Then $\Ev$ is fully faithful.
\end{Proposition}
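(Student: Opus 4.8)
The plan is to prove separately that $\Ev$ is faithful and that it is full, both times exploiting the four facts recalled in the introduction, which together exhibit $\Sch$ as generated from $\GG_a$ by iterated finite limits and finite colimits. Concretely there is a filtration: $\GG_a^{\times n}=\mathbb A^n$ is a finite product; an affine scheme $\operatorname{Spec}\QQ[x_1,\dots,x_n]/(f_1,\dots,f_r)$ is the equalizer of $(f_1,\dots,f_r)$ and $0\colon\mathbb A^n\rightrightarrows\mathbb A^r$; a quasi\-compact separated scheme is the coequalizer of the two maps $\coprod_{i,j}U_i\cap U_j\rightrightarrows\coprod_i U_i$ induced by the inclusions of a finite affine cover, all pieces being affine (finitely many by quasi\-compactness, the overlaps affine by separatedness); and an arbitrary quasi\-compact scheme is the same coequalizer with the pieces only quasi\-compact separated, the overlaps of affines being open in affine schemes hence quasi\-compact (Noetherianness) and separated. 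Since every object of $\Sch$ is quasi\-compact, these steps build all of $\Sch$, and every functor in $\AutSch$ preserves all the (co)limits involved.

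\textbf{Faithfulness.} Let $\alpha,\beta\colon\Phi\Rightarrow\Psi$ in $\AutSch$ satisfy $\alpha_{\GG_a}=\beta_{\GG_a}$. The full subcategory of $\Sch$ on the objects $X$ with $\alpha_X=\beta_X$ is closed under finite limits and finite colimits: if $X=\lim_i X_i$ with each $X_i$ in it, then $\Phi X=\lim_i\Phi X_i$ and $\Psi X=\lim_i\Psi X_i$, and composing $\alpha_X,\beta_X$ with the projections $\Psi X\to\Psi X_i$ gives in both cases $\alpha_{X_i}\circ\Phi(\operatorname{pr}_i)=\beta_{X_i}\circ\Phi(\operatorname{pr}_i)$, so $\alpha_X=\beta_X$; dually for colimits, using that a map out of a colimit is detected by the coprojections. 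This subcategory contains $\GG_a$ and the terminal object $\operatorname{Spec}\QQ$, hence by the generation statement it is all of $\Sch$, so $\alpha=\beta$.

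\textbf{Fullness.} Let $\phi\colon\Phi(\GG_a)\to\Psi(\GG_a)$ be a morphism of ring objects. Set $\alpha_{\mathbb A^n}:=\phi^{\times n}$ under the identifications $\Phi(\mathbb A^n)=\Phi(\GG_a)^{\times n}$ and $\Psi(\mathbb A^n)=\Psi(\GG_a)^{\times n}$. Naturality along a morphism $\mathbb A^n\to\mathbb A^m$ reduces, via the product structure, to a single polynomial $p\in\QQ[x_1,\dots,x_n]$; since $\phi$ respects $+,\times,0,1$ it commutes with every polynomial map with integer coefficients, and since $N\cdot(-)$ is an automorphism of $\GG_a$ for each integer $N\geq1$, the maps $\Phi(N\cdot-)$ and $\Psi(N\cdot-)$ are isomorphisms intertwined by $\phi$, so $\phi$ also commutes with $N^{-1}\cdot(-)$ and hence with any rational polynomial. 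Thus $\alpha$ is natural on the full subcategory of affine spaces. One then extends it along the filtration: on an affine scheme, written as a finite limit of affine spaces, $\alpha$ is forced by the universal property since $\Phi,\Psi$ preserve finite limits; likewise on quasi\-compact separated schemes, and then on all of $\Sch$, using the coequalizer presentations and that $\Phi,\Psi$ preserve those finite colimits. At each stage one verifies that the morphism produced is independent of the chosen presentation or cover and is natural in the scheme, which follows from the naturality already established at the previous stage. By construction $\alpha_{\GG_a}=\phi$, so $\Ev(\alpha)=\phi$.

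\textbf{Where the difficulty lies.} The categorical skeleton above is soft; the real content is the scheme\-theoretic input. One must know that the relevant coequalizers and coproducts actually exist \emph{in $\Sch$} and are computed by gluing — finite colimits of schemes, unlike fiber products, are not a formality — and that separatedness together with the finite\-type hypothesis supply exactly the affineness and quasi\-compactness needed to keep every intermediate piece inside the allowed class. The accompanying bookkeeping in the fullness argument (independence of choices, naturality of the extended $\alpha$) is routine but the most laborious part of the write\-up. The one genuinely slick point is that a bare ring\-object morphism is automatically compatible with division by integers, precisely because multiplication by $N$ is already invertible on $\GG_a$.
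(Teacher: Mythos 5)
Your proof is correct and takes essentially the same route as the paper's: extend from $\GG_a$ to affine spaces, then to affine schemes as finite limits of affine spaces, then to separated and general schemes via coequalizers of (affine, resp.\ separated) covers, with faithfulness following from the same generation-by-(co)limits argument. The one point where you supply a detail the paper dismisses as obvious is the naturality of $\phi^{\times n}$ against arbitrary $\QQ$-polynomial maps, handled via the invertibility of multiplication by $N$ on $\GG_a$; conversely, the paper is more explicit than you are about the remaining naturality and well-definedness checks (a ``cube lemma'' using that $F,G$ preserve mono/epimorphisms, and common refinements of covers), which you correctly identify as the laborious bookkeeping.
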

Before proving the proposition note that the Proposition implies the Lemma since the isomorphism $\Phi(\GG_a)\cong\GG_a=\Id(\GG_a)$ then lifts to an isomorphism $\Phi\cong\Id$.

We also note the following 

\begin{Corollary}
The only endomorphism of $\Id:\Sch\to\Sch$ is the trivial one.
\end{Corollary}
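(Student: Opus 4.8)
The plan is to deduce this directly from Proposition~\ref{Prop:fullyfaithful}. The endomorphisms of $\Id \colon \Sch \to \Sch$ are by definition the endomorphisms of $\Id$ as an object of $\AutSch$ (note that $\Id$ preserves all finite limits and colimits, so it genuinely lives in $\AutSch$). Since $\Ev \colon \AutSch \to \RingSch$ is fully faithful, it induces a bijection
\[
\Hom_{\AutSch}(\Id, \Id) \;\xrightarrow{\ \sim\ }\; \Hom_{\RingSch}\bigl(\Ev(\Id), \Ev(\Id)\bigr) \;=\; \Hom_{\RingSch}(\GG_a, \GG_a).
\]
So it suffices to show that the only ring endomorphism of $\GG_a$ (with its standard ring structure, as an object of $\Sch$) is the identity.

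To see this, recall that a ring endomorphism of $\GG_a = \mathbb{A}^1$ over $\QQ$ corresponds, by taking global sections, to a $\QQ$-algebra endomorphism of $\QQ[t]$ that is also compatible with the coring structure dual to addition and multiplication on $\mathbb{A}^1$. Concretely such a morphism $f \colon \GG_a \to \GG_a$ is given by a polynomial $p \in \QQ[t]$ with $f^\#(t) = p(t)$; additivity forces $p(x+y) = p(x) + p(y)$, so $p$ is linear, $p(t) = ct$ for some $c \in \QQ$; and multiplicativity together with the requirement that $f$ preserve the unit $1 \in \mathbb{A}^1$ forces $c = 1$. Hence $f = \Id_{\GG_a}$, and therefore $\Hom_{\RingSch}(\GG_a,\GG_a)$ is a singleton. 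Pulling this back through the bijection above, $\Hom_{\AutSch}(\Id,\Id)$ is a singleton, i.e. the only endomorphism of $\Id \colon \Sch \to \Sch$ is the identity.

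I do not expect any serious obstacle here: the content is entirely in Proposition~\ref{Prop:fullyfaithful}, which we are permitted to assume, and the remaining input — that $\GG_a$ has no nontrivial ring endomorphisms — is an elementary computation with polynomials. The only point requiring a little care is making sure we are computing morphisms in the right category: $\RingSch$ is the category of \emph{unital} ring objects, so morphisms are required to preserve the unit, which is exactly what rules out the zero endomorphism and the scalings $t \mapsto ct$ with $c \neq 1$.
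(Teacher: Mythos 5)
Your proof is correct and follows exactly the paper's route: the paper likewise deduces the corollary from \autoref{Prop:fullyfaithful} together with the observation that $\GG_a$ has no nontrivial unital ring scheme endomorphisms. You have merely spelled out the elementary polynomial computation that the paper leaves implicit.
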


\begin{proof}
The corollary follows from the theorem since 
the only morphism of unital ring schemes from $\GG_a$ to itself is trivial.
\end{proof}

\begin{proof}[Proof of \autoref{Prop:fullyfaithful}]
Let $F,G\in \AutSch$ and $\phi:F\to G$. It is obvious that $\Ev(\phi):F(\GG_a)\to(\GG_a)$ is a morphism of unital ring schemes.

\textbf{Faithfulness:} 

Let $I:D\to \Sch$ be a finite diagram and $X$ a limit (or a colimit) of $I$. A natural transformation gives a map $F\circ I \to G\circ I$. By assumption $G(X)$ is a limit of $G\circ I$, so there is a unique morphism $F(X)\to G(X)$ which makes all the relevant diagrams commute. This means that the natural transformation is determined on $X$ by its values on the image of $I$. The same is obviously true when $X$ is a colimit of $I$.

It is known that any scheme can be obtained from $\GG_a$ via a sequence of finite limits and colimits, so $\Ev(\phi)$ determines $\phi$.

\textbf{Fullness:}
We have to extend any morphism of ring object $\psi:F(\GG_a)\to(\GG_a)$ to a morphism $:F\to G$

\newcounter{Step}
\textbf{Step \theStep} \stepcounter{Step}
It is obvious that $\psi$ extends to the subcategory spanned by powers of $\GG_a$ (including the point). 

\textbf{Step \theStep} \stepcounter{Step}
We can extend $\psi$ to affine varieties:

Any affine variety $X$ can be written as a pullback 
\[
\stik{1}{
X \ar{d} \ar[phantom]{dr}[above,yshift=-0.3pc,xshift=-0.8pc,scale=1.5]{\ulcorner} \ar[hook]{r} \& \GG_a^n \ar{d}\\
\pt \ar[hook]{r} \& \GG_a^k
}
\]
So this gives a candidate morphism $F(X)\to G(X)$ as $G$ preserves pullbacks. If we show that this is natural in $X$ this will in particular show that the morphism does not depend on a choice of presentation. So consider another affine variety $Y$ and a morphism $X\xrightarrow{f} Y$. Write $Y$ as a pullback
\[
\stik{1}{
Y \ar{d} \ar[phantom]{dr}[above,yshift=-0.3pc,xshift=-0.8pc,scale=1.5]{\ulcorner} \ar[hook]{r} \& \GG_a^m \ar{d}\\
\pt \ar[hook]{r} \& \GG_a^r
}
\]
By basic properties of affine varieties, the morphism $f$ can be extended to a commutative square

\[
\stik{1}{
X \ar[hook]{d} \ar{r}{f} \& Y \ar[hook]{d}\\
\GG_a^n \ar{r} \& \GG_a^m
}
\]
Adding in our candidate morphisms, and noting that $F,G$ preserve monomorphisms, we get a cube

\[\stik{1}{
{} \& G(X) \ar{rr}{G(f)} \ar[hook]{dd}[left,yshift=-1pc]{} \& {} \& G(Y) \ar[hook]{dd}{}\\
F(X) \ar[crossing over]{rr}[xshift=1pc]{F(f)} \ar[hook]{dd}[left]{} \ar{ur}{} \& {} \& F(Y) \ar[hook]{dd}[yshift=-1pc,left,xshift=-0.5pc]{} \ar{ur}{} \& {}\\
{} \& G(\GG_a^n) \ar{rr}[xshift=-1pc]{} \& {} \& G(\GG_a^m)\\
F(\GG_a^n) \ar{rr}{} \ar{ur}{} \& {} \& F(\GG_a^m) \ar{ur}
\latearrow{commutative diagrams/crossing over,commutative diagrams/hook}{2-3}{4-3}{}
}
\]
with all faces commuting except (possibly) the top. The sides commute because of the way we defined our candidate morphisms, the front and back are images of commutative squares, and the bottom commutes because it involves only powers of $\GG_a$. 

\begin{Lemma}
\label{cubelemma}
Suppose we are given a cube in a category $\CCC$ with all faces commuting except the possibly the top, and all vertical maps monomorphic. Then the top face commutes as well.
\end{Lemma}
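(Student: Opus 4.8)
The plan is to label the cube so the vertices are $A, B, C, D$ on top and $A', B', C', D'$ on the bottom, with the vertical monomorphisms $a\colon A\hookrightarrow A'$, $b\colon B\hookrightarrow B'$, $c\colon C\hookrightarrow C'$, $d\colon D\hookrightarrow D'$, and with the top face consisting of maps $A\to B$, $A\to C$, $B\to D$, $C\to D$ (or whatever the actual orientation is) forming the two composites $A\to B\to D$ and $A\to C\to D$ that we wish to show agree. The key idea is simple: to prove two maps $A\to D$ are equal, it suffices to prove they are equal after postcomposing with the monomorphism $d\colon D\hookrightarrow D'$.

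First I would compute $d$ composed with the first composite $A\to B\to D$. Using commutativity of the side face containing $B$ and $D$ (which expresses $d\circ(B\to D) = (B'\to D')\circ b$) and then commutativity of the side face containing $A$ and $B$, I can rewrite this as a path that goes $A\to A'\to B'\to D'$ along the bottom and back faces. Symmetrically, I would compute $d$ composed with the second composite $A\to C\to D$, using the two side faces through $C$, obtaining the path $A\to A'\to C'\to D'$. Both of these are now equal to $(A\to A')$ followed by one of the two composites in the bottom face; since the bottom face commutes, these two paths agree. Hence $d$ composed with both top composites agrees, and since $d$ is monic we conclude the two top composites are equal, i.e. the top face commutes.

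There is essentially no obstacle here beyond bookkeeping: the only thing to be careful about is that the four side faces are used in the right pairing (each top composite $A\to D$ gets "pushed down" to the bottom along exactly two adjacent side faces), and that the diagram is oriented so that all the needed faces are among the ones assumed to commute. I would just pick one orientation, write out the two chains of equalities explicitly, and invoke that $d$ is a monomorphism to cancel it at the end. No universal property of limits or colimits is needed — this is a pure diagram chase valid in any category.
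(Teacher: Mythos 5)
Your proof is correct and is exactly the argument the paper gives (the paper compresses it to one sentence: postcompose the two top composites with the vertical monomorphism at the target corner, push down along the commuting side faces to the commuting bottom face, and cancel the mono). The extra bookkeeping you spell out is fine but adds nothing beyond the paper's reasoning.
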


\begin{proof}
From the definition of a monomorphism it is enough to show that the morphisms of the top face post-composed with a vertical are equal. This follows from commutativity of the other faces.
\end{proof}
The commutativity of the top face is the naturality of our candidate.
\textbf{Step \theStep} \stepcounter{Step}
We can extend $\psi$ to the category of separated schemes:

Consider the category $\CovSch_{s}$ whose objects are a separated scheme $X$ along with an affine cover $\UUU\to X$. The morphisms are commutative squares
\[
\stik{1}{
\VVV \ar{r} \ar[two heads]{d} \& \UUU \ar[two heads]{d} \\
Y \ar{r} \& X
}
\]
The functors $F,G$ can be extended to autofunctors $\widetilde{F},\widetilde{G}$ of $\CovSch_{s}$ in the obvious way. Moreover, $\psi$ can be extended to a morphism $\widetilde{\psi}:\widetilde{F}\to\widetilde{G}$ as follows:

Consider a covered scheme $\UUU\to X$. The cover displays $X$ as a colimit of the diagram $\UUU\times_X\UUU\to \UUU$\Rami{.}
Since $X$ is separated the intersections are all afine and thus $\UUU\times_X\UUU\to \UUU$ is affine. By the previous step we have a map $F(\UUU)\to G(\UUU)$ and so this induces a unique map $F(X)\to G(X)$ which is our candidate for $\psi_{{}_{\UUU\to X}}$.

To see that this is a natural map, we use a similar argument to the previous step. Consider a morphism of covered schemes
\[
\stik{1}{
\VVV \ar{r} \ar[two heads]{d} \& \UUU \ar[two heads]{d} \\
Z \ar{r} \& X
}
\]

From the above we get a cube
\[
\stik{1}{
{} \& G(\VVV) \ar{rr}{} \ar[two heads]{dd}[left,yshift=-1pc]{} \& {} \& G(\UUU) \ar[two heads]{dd}{}\\
F(\VVV) \ar[crossing over]{rr}[xshift=1pc]{} \ar[two heads]{dd}[left]{} \ar{ur}{} \& {} \& F(\UUU) \ar[two heads]{dd}[yshift=-1pc,left,xshift=-0.5pc]{} \ar{ur}{} \& {}\\
{} \& G(Z) \ar{rr}[xshift=-1pc]{} \& {} \& G(X)\\
F(Z) \ar{rr}{} \ar{ur}{} \& {} \& F(X) \ar{ur}
\latearrow{commutative diagrams/crossing over,commutative diagrams/two heads}{2-3}{4-3}{}
}
\]
with all sides commuting except possibly the bottom, and all verticals epimorphisms. The dual of \autoref{cubelemma} implies that the bottom commutes as well, which is what we want.

To finish the step we need to show that $\psi_{{}_{\UUU\to X}}$ doesn't depend on the cover, i.e. that for any two covers $\UUU,\UUU'$ of $X$ we have $\psi_{{}_{\UUU\to X}}=\psi_{{}_{\UUU'\to X}}$. If we had a map of the covers over $\Id_X$ this would be immediate. This is not always the case, but we can always take the pullback of the covers to get a common refinement $\UUU\leftarrow\UUU''\to\UUU\Rami{'}$. Hence, $\psi_{{}_{\UUU\to X}}=\psi_{{}_{\UUU''\to X}}=\psi_{{}_{\UUU'\to X}}$.

\textbf{Step \theStep} \stepcounter{Step}
We can extend to all schemes:

This follows in exactly the same way as the previous step, using the fact that every scheme has a cover by separated schemes, with separated intersections.

\end{proof}
\section{Proof of main theorem}\label{sec:ga.pres}
In order to finish the proof of the main theorem one has to prove
\begin{Theorem}\label{thm:Ga}
Let $\Phi$ be an autoequivalence of $\Sch$. Then $\Phi(\GG_a)\simeq\GG_a$ as ring objects.
\end{Theorem}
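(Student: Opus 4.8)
The plan is to pin down $\GG_a$ by a universal property that any autoequivalence is forced to respect, and then to promote the resulting isomorphism of group objects to one of ring objects. Write $\pt$ for the terminal object of $\Sch$, and let $\mathcal{C}$ be the category whose objects are commutative group objects $A$ in $\Sch$ that are \emph{torsion free}, in the sense that for every $n\ge 1$ the multiplication-by-$n$ endomorphism $[n]\colon A\to A$ (assembled from the group structure) is a monomorphism, and \emph{pointed}, i.e.\ equipped with a morphism $p\colon\pt\to A$; morphisms in $\mathcal{C}$ are group homomorphisms commuting with the chosen points. Every ingredient of this definition — finite products, the terminal object, the group-object axioms, being a monomorphism, and the chosen point — is preserved by any autoequivalence of $\Sch$, since an equivalence preserves finite limits (hence the terminal object) and monomorphisms. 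Thus $\Phi$ induces an autoequivalence of $\mathcal{C}$, and in particular it sends any initial object of $\mathcal{C}$ to an initial object of $\mathcal{C}$.

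The first ingredient is \autoref{prop:init.ob}, which asserts that the affine line with its additive group structure, pointed at $1$, is an initial object of $\mathcal{C}$. Granting it, $\Phi(\GG_a,1)$ is again initial, so there is an isomorphism $\alpha\colon\Phi(\GG_a)\xrightarrow{\ \sim\ }\GG_a$ of commutative group objects with $\alpha\circ\Phi(1)=1$; by uniqueness of initial objects, $\alpha$ is the unique morphism of $\mathcal{C}$ with these properties, which is what ultimately underlies the uniqueness clause of \autoref{thm:main}.

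It remains to see that $\alpha$ is an isomorphism of ring objects, i.e.\ that it intertwines $\Phi(m)$ with the multiplication $m$ of $\GG_a$, where we use $\Phi(\GG_a\times\GG_a)\cong\Phi(\GG_a)\times\Phi(\GG_a)$. Put
\[ m'\;:=\;\alpha\circ\Phi(m)\circ(\alpha^{-1}\times\alpha^{-1})\ \colon\ \GG_a\times\GG_a\longrightarrow\GG_a. \]
Because $\alpha$ respects addition and sends $\Phi(1)$ to $1$, and because $\Phi$ preserves the diagrams encoding distributivity and left-unitality, $m'$ is bi-additive and satisfies $m'\circ(1\times\Id_{\GG_a})=\Id_{\GG_a}$. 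The claim then follows from the elementary fact that $m$ is the unique bi-additive morphism $\GG_a\times\GG_a\to\GG_a$ that restricts to the identity on $\{1\}\times\GG_a$: any morphism $\GG_a\times\GG_a\to\GG_a$ is a polynomial $f(x,y)\in\QQ[x,y]$, and bi-additivity forces $f$ to have the form $\lambda xy$ (over $\QQ$ the additive polynomials are exactly the linear homogeneous ones), while $f(1,y)=y$ gives $\lambda=1$. Hence $m'=m$, and $\alpha$ transports the entire ring structure of $\Phi(\GG_a)$ onto that of $\GG_a$.

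The deduction above is essentially formal; the real content — and the step I expect to be the main obstacle — is \autoref{prop:init.ob}. Its proof should amount to showing that a torsion-free commutative algebraic group of finite type over $\QQ$ is necessarily a vector group $\GG_a^n$: the scheme-theoretic torsion-freeness of the maps $[n]$ forbids a non-trivial (finite) component group, any subgroup of multiplicative type, and any abelian-variety quotient — each of these would contribute non-trivial $n$-torsion for suitable $n$ — so in characteristic $0$ one is left with connected commutative unipotent groups, which are precisely the $\GG_a^n$, on which every $[n]$ is an isomorphism. One then checks that a pointed homomorphism $(\GG_a,1)\to(\GG_a^n,v)$ is unique, since the homomorphisms $\GG_a\to\GG_a^n$ are exactly the maps $x\mapsto wx$ with $w\in\QQ^n$ and the pointing forces $w=v$. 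Carrying this out carefully — and in a form robust enough to handle the subtleties flagged in the introduction, such as the behaviour of group schemes in greater generality and the modification needed over $\FF_2$ — is where the work concentrates.
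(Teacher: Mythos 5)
Your proposal is correct and follows the paper's own route: both rest entirely on \autoref{prop:init.ob} to produce a pointed-group isomorphism $\alpha\colon\Phi(\GG_a)\to\GG_a$ (your category of ``torsion-free'' pointed groups is a full subcategory of the paper's $\CCC$, so initiality transfers), and then promote $\alpha$ to a ring isomorphism by a short formal argument. The only, harmless, variation is in that last step, where the paper checks that the multiplication squares commute on the Zariski-dense set of integer multiples of the unit, while you instead classify bi-additive left-unital morphisms $\GG_a\times\GG_a\to\GG_a$ as exactly $xy$; both arguments are valid in characteristic zero.
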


What we need to show then, is that $\GG_a$ can be characterized up to isomorphism in categorical terms.

Let $G$ be a commutative group object in $\Sch$. We define the map \\ $m_2:G\rightarrow G$ as the composition of the diagonal $G\rightarrow G\times G$ and the product $G\times G\rightarrow G$. This is a group endomorphism of $G$, since $G$ is commutative. Note that $m_2$ is defined entirely in categorical terms (in the category of commutative group objects).

Let $\CCC$ be the category consisting of pointed commutative group objects in $\Sch$ such that $\ker(m_2)$ is trivial. By ``pointed" we mean with a specified $\QQ$-point.

\begin{Proposition}\label{prop:init.ob}
$(\GG_a,1)$ is an initial object in $\CCC$.
\end{Proposition}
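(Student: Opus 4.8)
The goal is to show that for any object $(G,g)$ of $\CCC$ — i.e. a pointed commutative group scheme over $\QQ$, of finite type, on which multiplication-by-$2$ (the map $m_2$) is injective — there is a unique morphism of pointed group objects $(\GG_a,1)\to(G,g)$. Uniqueness is the easier half: a pointed-group morphism $\GG_a\to G$ is in particular a group homomorphism sending $1\mapsto g$, and since $\ZZ$ is dense in $\GG_a$ (the subgroup generated by $1$ is Zariski-dense in $\GG_a$), such a homomorphism is determined on a dense set, hence unique because $G$ is separated (being finite type over a field, $G$ is separated, so the equalizer of two such maps is closed and contains a dense subset). So the heart of the matter is \emph{existence}.

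\textbf{Step 1: Reduce to the identity component and to the affine/unipotent case.}
First I would replace $G$ by its identity component $G^0$, which is an open (hence finite-type) subgroup scheme containing any image of the connected scheme $\GG_a$; so WLOG $G$ is connected. Over a field of characteristic zero, group schemes of finite type are smooth (Cartier), so $G$ is a connected smooth algebraic group. Now I want to show the injectivity of $m_2$ forces $G$ to be unipotent and in fact a vector group. By Chevalley's structure theorem, $G$ sits in an extension $1\to L\to G\to A\to 1$ with $L$ linear (affine) and $A$ an abelian variety. On an abelian variety $A$ of positive dimension, multiplication by $2$ is a nontrivial isogeny with kernel $(\ZZ/2)^{2\dim A}\neq 0$; pulling back, this produces a nontrivial kernel of $m_2$ on $G$ unless $A=0$. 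Hence $G$ is affine, i.e. a connected linear algebraic group. Decompose its (commutative, so the structure is clean) structure: $G\cong T\times U$ with $T$ a torus and $U$ unipotent. On a torus $\GG_m$, multiplication by $2$ again has kernel $\mu_2\neq 0$; so $T=0$ and $G$ is a connected commutative unipotent group over $\QQ$. In characteristic zero such a group is a vector group: $G\cong \GG_a^n$ as a group scheme (this is standard — the exponential/logarithm, or induction on the derived/central series, using that $\mathrm{Ext}^1(\GG_a,\GG_a)=0$ in char $0$). At this point $g\in\QQ^n$ is just a vector.

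\textbf{Step 2: Conclude.}
Once $G\cong\GG_a^n$ with chosen point $g=(g_1,\dots,g_n)$, the map $\GG_a\to\GG_a^n$, $t\mapsto (g_1 t,\dots,g_n t)$, is a morphism of pointed group objects, giving existence. Combined with Step's uniqueness argument, $(\GG_a,1)$ is initial in $\CCC$. I should double-check the pointing conventions: "initial" requires the unique map to respect the marked $\QQ$-points, which the linear map $t\mapsto gt$ does by construction, and uniqueness among \emph{all} morphisms in $\CCC$ (not just pointed-group morphisms) is fine because morphisms in $\CCC$ are by definition morphisms of pointed group objects.

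\textbf{Main obstacle.}
The delicate point is Step 1: assembling the structure theory (Chevalley, Cartier smoothness, the torus/unipotent splitting in the commutative case, and vanishing of $\mathrm{Ext}^1(\GG_a,\GG_a)$ to get an actual vector-group splitting rather than just a successive extension) and making sure the injectivity of $m_2$ is genuinely exploited at each stage where a nontrivial $2$-torsion subgroup scheme could appear — abelian-variety part, toral part, and the finite part coming from $\pi_0(G)$. One must be a little careful that "trivial kernel of $m_2$" is read scheme-theoretically: e.g. $\mu_2$ over $\QQ$ has no nontrivial $\QQ$-points but is a nontrivial subgroup scheme, so the argument ruling out tori needs the kernel in the scheme-theoretic sense — which is exactly how $m_2$ and $\ker(m_2)$ are defined categorically here. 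Handling the component group (a finite commutative $\QQ$-group scheme with injective doubling, hence of odd order, but it could still be nontrivial a priori — however connectedness of $\GG_a$ already sidesteps this for the existence half, and for the structural claim one notes a nontrivial finite part would again contribute $2$-torsion unless it has odd order, which is consistent and harmless since we only reduced to $G^0$ anyway). Beyond that, the proof is routine.
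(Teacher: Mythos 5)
Your proof follows the same route as the paper's: uniqueness from Zariski-density of the multiples of $1$ in $\GG_a$, and existence by invoking Chevalley's theorem and the torus/unipotent splitting to show that triviality of $\ker(m_2)$ forces $G$ to be a vector group, then mapping $t\mapsto tg$. The only point you pass over quickly --- that $2$-torsion on the abelian quotient $A$ actually lifts to $2$-torsion on $G$ (``pulling back''), which requires correcting a lift $x$ of a $2$-torsion point by a square root of $x^2$ in the affine part, i.e.\ surjectivity of $m_2$ there --- is exactly where the paper inserts \autoref{lem:affineroot}; otherwise your argument matches the paper's and is even slightly more careful about the identity component and about reading $\ker(m_2)$ scheme-theoretically.
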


For the proof we will need a lemma:

\begin{Lemma}
\label{lem:affineroot}
For any affine commutative group scheme, the map $m_2$ defined above is onto.
\end{Lemma}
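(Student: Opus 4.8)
The plan is to invoke the structure theory of affine commutative algebraic groups over a field of characteristic zero. Since $\QQ$ has characteristic $0$, a finite-type affine group scheme $G$ over $\QQ$ is automatically smooth, and its identity component $G^0$ is a connected linear algebraic group, open and closed in $G$, with finite étale component group $\pi_0(G)=G/G^0$. For connected commutative $G^0$ there is a canonical direct-product decomposition $G^0\cong T\times U$ into the closed subgroup $T$ of semisimple elements (a torus) and the closed subgroup $U$ of unipotent elements; and in characteristic zero a connected commutative unipotent group is a vector group, $U\cong\GG_a^n$. As $m_2$ is a group homomorphism it respects this entire structure, so it is enough to prove that $m_2$ is surjective on each of $T$, $\GG_a^n$ and $\pi_0(G)$ and then reassemble.

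First I would dispose of the connected part. On $\GG_a^n$ the endomorphism $m_2$ is multiplication by $2\in\QQ^\times$, hence an isomorphism. For the torus, surjectivity of $m_2\colon T\to T$ may be checked after base change to $\overline{\QQ}$, where $T$ becomes $\GG_m^r$ and $m_2$ becomes coordinatewise squaring; the latter is surjective on $\overline{\QQ}$-points since $\overline{\QQ}$ is algebraically closed. Hence $m_2$ is surjective on $T$, and therefore on $G^0=T\times U$; in other words $G^0$ is $2$-divisible.

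The one \emph{genuinely delicate} point is surjectivity on the finite component group, and this is the main obstacle: for a finite abelian group multiplication by $2$ is generally not surjective (it already fails for $G=\mu_2$, where $m_2$ is the constant map to the identity). Here I would use the information available in every application of the lemma, namely that $G$ lies in $\CCC$, so $\ker(m_2)$ is trivial; I claim this forces $\pi_0(G)$ to have odd order. Indeed, suppose $\pi_0(G)(\overline{\QQ})$ had an element of order $2$ and lift it to $g\in G(\overline{\QQ})$; then $2g\in G^0(\overline{\QQ})$, and since $G^0$ is $2$-divisible we may subtract an element of $G^0(\overline{\QQ})$ to arrange $2g=0$, so $g$ is a $\overline{\QQ}$-point of the trivial group $\ker(m_2)$, i.e.\ $g=0$ and the class was trivial. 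On a finite group of odd order $m_2$ is a bijection. To conclude, the image of the homomorphism $m_2\colon G\to G$ is a closed subgroup $H\subseteq G$; it contains $m_2(G^0)=G^0$ by the previous paragraph and surjects onto $m_2(\pi_0(G))=\pi_0(G)$, so $H=G$ and $m_2$ is onto. (If one only wants the statement for connected $G$, this last paragraph is unnecessary.)
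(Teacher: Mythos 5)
Your handling of the connected part coincides with the paper's entire proof: the paper just writes the group as (vector group)$\,\times\,$(torus) and notes that $m_2$ is onto on each factor away from characteristic $2$. Where you genuinely differ is in refusing to assume connectedness, and your suspicion is warranted: as literally stated the lemma is false, since $\mu_2\cong\ZZ/2$ is an affine commutative group scheme of finite type over $\QQ$ on which $m_2$ is the constant map to the identity. The paper's one\mbox{-}line proof silently invokes the decomposition into a torus times a unipotent group, which exists only for connected groups, so it has a gap at exactly the point you isolate. Your repair is correct: in the one place the lemma is invoked it is applied to the affine kernel $B$ of the Chevalley sequence for some $X\in\CCC$, and since $\ker(m_2|_B)\subseteq\ker(m_2|_X)$ is trivial, your argument (lift a $2$-torsion class of $\pi_0$, use $2$-divisibility of the identity component to adjust the lift into $\ker(m_2)$, conclude $\pi_0$ has odd order) legitimately rules out the bad component groups; the final step, that a closed subgroup containing $G^0$ and surjecting onto $\pi_0(G)$ is everything, is also fine. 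The price is that you prove a weaker statement than the one announced, namely surjectivity of $m_2$ only for those affine commutative groups with $\ker(m_2)$ trivial, but that is all the paper ever uses. The alternative repair, presumably what the authors intended, is to insert the word ``connected'' into the statement and observe that the Chevalley kernel $B$ is connected; either way the paper's proof needs emendation, and yours is the more self-contained of the two.
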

Before proving the lemma we will deduce the proposition from it.
\begin{proof}[Proof of \autoref{prop:init.ob}]
First, we show that $\GG_a$ is in $\CCC$. If we identify $\QQ[\GG_a]$ with $\QQ[t]$, then the map $m_2$ corresponds to the map $t\mapsto 2\cdot t$. This map is onto and hence the kernel of $m_2$ is trivial.

Now let $(X,b)\in \CCC$. Suppose we have a map $\GG_a\rightarrow X$ which sends $1$ to $b$, then this map is uniquely defined by this requirement because the multiples of $1$ are a dense subset in $\GG_a$. That is, if we consider the system of maps $n:\pt\to \GG_a$ then any two maps $f,g:\GG_a\to X$ such that $f\circ n=g\circ n$ for all $n$ must coincide.

Existence: We will show that the condition that $\ker(m_2)$ is trivial implies that $X$ is a commutative unipotent group, hence a product of copies of $\GG_a$.

It is known (see e.g. \cite[Chapter 10]{Milne_ag}) that $X$ admits an exact sequence 
\[
1\rightarrow B \rightarrow X \rightarrow A \rightarrow 1
\]
where $A$ is an abelian variety, and $B$ is an affine commutative group. We claim that the condition that $\ker(m_2)$ is trivial implies that $A$ is trivial. Indeed if it is not then we have a 2-torsion element $a\in A(\CC)$ (see e.g. \cite{Milne_ab}). Let $x$ be a preimage of $a$ in $X$. By assumption $x^2$ will be in the kernel, which is isomorphic to $B$. By
 \autoref{lem:affineroot} we can extract a root in $B(\CC)$ for $x^2$ and get a nontrivial 2-torsion element in $X(\CC)$, which contradicts the fact that $\ker(m_2)$ is trivial.

Assuming \autoref{lem:affineroot}, we are reduced to the case when $X$ is affine, and hence $X=X_s\times X_u$ where $X_s$ is a torus and $X_u$ is commutative unipotent.

The condition that $\ker(m_2)$ is trivial implies once again that $X_s$ is trivial and we are finished.
\end{proof}

\begin{proof}[Proof of \autoref{lem:affineroot}]
Any affine commutative group is a product of a commutative unipotent group - i.e. a vector space - and a torus. In both cases the lemma obviously holds in characteristic not 2.
\end{proof}

\begin{proof}[Proof of \autoref{thm:Ga}]
From \autoref{prop:init.ob} it follows that $\Phi(\GG_a)$ is isomorphic to $\GG_a$ as a pointed group object. Denote the isomorphism by $\alpha$.

We need to show that $\alpha$ preserves the ring structure. 
Consider any integer number $n$ as an element in $\GG_a(\QQ)$ in the standard way. Let $e \in \Phi(\GG_a)(\QQ)$ be the unit w.r.t. multiplication. Let  $ne:=\underbrace{e+\cdots +e}_{n \text{ times}}$.  
We know that $\alpha(1)= e$ then $\alpha(n)=ne$ for any $n\in\ZZ$. Thus $\alpha(nm)=(nm)e=(ne)(me)=\alpha(n)\alpha(m)$. This means that the diagram
\[
\stik{1}{
\Phi(\GG_a)\times\Phi(\GG_a)\ar{r}{\alpha\times\alpha} \ar{d}{m}\& \GG_a\times \GG_a \ar{d}{m} \\
\Phi(\GG_a) \ar{r}{\alpha} \& \GG_a
}
\]
commutes on a dense subset of $\Phi(\GG_a)\times\Phi(\GG_a)$ (in the sense discussed in the proof of \autoref{prop:init.ob}) and hence commutes. Therefore $\alpha$ is an isomorphism of ring objects.
This completes the proof.
\end{proof}

\printbibliography

\end{document}